\theoremstyle{plain} 
\newtheorem{theorem}{\indent\sc Theorem}[section]
\newtheorem{lemma}[theorem]{\indent\sc Lemma}
\newtheorem{corollary}[theorem]{\indent\sc Corollary}
\theoremstyle{definition} 
\newtheorem{definition}[theorem]{\indent\sc Definition}
\newtheorem{remark}[theorem]{\indent\sc Remark}
\newtheorem*{jacconjecture*}{\indent\sc  \(\text{JC}(R, n)\)}
\newtheorem*{tjconjecture*}{\indent\sc \(\text{TJC}(R, I, n)\)}
\newtheorem*{unimodularconjecture}{\indent\sc Unimodular Conjecture for \(\mathbb{Z}_p\)}
\DeclareMathOperator{\ZZ}{\mathbb{Z}}
\DeclareMathOperator{\CC}{\mathbb{C}}
\DeclareMathOperator{\QQ}{\mathbb{Q}}
\DeclareMathOperator{\JC}{JC}
\DeclareMathOperator{\TJC}{TJC}
\begin{document}

\title[A Tate algebra version of the Jacobian conjecture]{A Tate algebra version of the Jacobian conjecture} 

\author[Lucas Hamada]{Lucas Hamada} 

\author[Kazuki Kato]{Kazuki Kato} 

\author[Ryo Komiya]{Ryo Komiya}


\subjclass[2020]{ 
Primary 14R15; Secondary 13F20.
}
%
\keywords{ 
Jacobian conjecture, Tate algebra.
}
\address{
Department of Mathematics \endgraf
Institute of Science Tokyo \endgraf
2-12-1, O-okayama, Meguro-ku, Tokyo 152-8551 \endgraf
Japan
}
\email{lucas.h.r.hamada@gmail.com}

\address{
Department of Mathematics \endgraf
Institute of Science Tokyo \endgraf
2-12-1, O-okayama, Meguro-ku, Tokyo 152-8551 \endgraf
Japan
}
\email{kato.k.ba@m.titech.ac.jp}

\address{
Department of Mathematics \endgraf
Institute of Science Tokyo \endgraf
2-12-1, O-okayama, Meguro-ku, Tokyo 152-8551 \endgraf
Japan
}
\email{komiya.r.ab@m.titech.ac.jp}

\maketitle

\begin{abstract}
    This paper investigates a Tate algebra version of the Jacobian conjecture, referred to as {\it the Tate-Jacobian conjecture}, for commutative rings \(R\) equipped with an \(I\)-adic topology. We show that if the \(I\)-adic topology on \(R\) is Hausdorff and \(R/I\) is a subring of a \(\QQ\)-algebra, then the Tate-Jacobian conjecture is equivalent to the Jacobian conjecture. Conversely, if \(R/I\) has positive characteristic, the Tate-Jacobian conjecture fails. Furthermore, we establish that the Jacobian conjecture for \(\CC\) is equivalent to the following statement: for all but finitely many primes \(p\), the inverse of a polynomial map over \(\CC_p\) whose Jacobian determinant is an element of \(\CC_p^\times\) lies in the Tate algebra over \(\CC_p\).
\end{abstract}

\section*{Introduction}

The {\bf Jacobian conjecture} (JC), first proposed by O.-H. Keller in 1939 \cite{Keller} and later included among Smale's problems, concerns the invertibility of polynomial maps over a commutative ring \(R\) under the assumption that their Jacobian determinant is a unit in the polynomial ring. Given a polynomial map  
\(
F = (f_1, \ldots, f_n) \in R[X_1, \ldots, X_n]^n,
\)  
its Jacobian matrix is defined by
\[
JF = \left( \frac{\partial f_i}{\partial X_j} \right)_{1 \leq i,j \leq n}.
\]  
The conjecture, referred to as \(\JC(R,n)\), asserts the following:  

\begin{align*}
\begin{split}
    &\text{If } F \in R[X_1, \ldots, X_n]^n\text{ satisfies }\det JF \in R[X_1, \ldots, X_n]^\times,\text{ then there exists }\\&G \in R[X_1, \ldots, X_n]^n\text{ such that }F \circ G = G \circ F = (X_1, \ldots, X_n).
\end{split}
\tag*{\(\JC(R,n)\)}
\end{align*} 

\noindent
 In this paper, \(\JC(R)\) refers to the conjunction of \(\JC(R, n)\) for all \(n \geq 1\). The existing results on the Jacobian conjecture depend on the characteristic of the ring \( R \). Here, \( R \) is said to have characteristic \( c \), where \(c\) is a nonnegative integer, if the kernel of the unique homomorphism \( \ZZ \to R \) is the ideal \( c\ZZ \).

 \begin{itemize}
     \item {\it Characteristic zero case}: E. Connell and L. van den Dries \cite{CD}, together with H. Bass, E. Mutchnik, and D. Wright \cite{BMW}, proved that for any integral domains \( R \) and \( S \) of characteristic zero, \( \JC(R) \) is equivalent to \( \JC(S) \). Later, A. van den Essen \cite[Theorem 1.1.12]{Es} generalized this result to any rings \( R \) and \( S \) that are subrings of \( \mathbb{Q} \)-algebras.
     \item {\it Positive characteristic case}: For a ring \( R \) of characteristic \( c > 0 \), \( \JC(R, n) \), and hence \(\JC(R)\), is known to be false. For instance, the polynomial map \(F = (X_1 - X_1^c, \ldots, X_n - X_n^c)\) provides a counterexample.
 \end{itemize}  

This paper defines and explores the {\bf Tate-Jacobian Conjecture} (TJC), a variation of the Jacobian conjecture in the context of Tate algebras over rings equipped with a topology induced by an ideal. Let \( I \) be an ideal of a ring \( R \). The Tate algebra \( (R, I) \langle X_1, \ldots, X_n \rangle \) consists of formal power series over \( R \) whose coefficients converge to zero in the \( I \)-adic topology on \( R \). We study the following conjecture, referred to as \( \TJC(R, I, n) \), for rings \( R \) with an ideal \( I \) and integers \( n \geq 1 \):

\begin{align}
\begin{split}
    &\text{If }F \in (R, I)\left<X_1, \ldots, X_n\right>^n\text{ satisfies }\det JF \in (R, I)\left<X_1, \ldots, X_n\right>^\times\\&\text{and }F(0) = 0\text{, then there exists }G \in (R, I)\left<X_1, \ldots, X_n\right>^n\text{ such that}\\&F \circ G = G \circ F = (X_1, \ldots, X_n).
\end{split}
\tag*{\(\TJC(R,I,n)\)}
\end{align} 

\noindent
Similarly to the Jacobian conjecture, \( \TJC(R, I) \) refers to the conjunction of \( \TJC(R, I, n) \) for all \( n \geq 1 \). The following cases illustrate its scope:

    \begin{itemize} 
        \item If \(I\) is nilpotent, that is, \(I^e = (0)\) for some \(e \geq 1\), then \(\TJC(R, I, n)\) is equivalent to \(\JC(R,n)\). 
        \item If \(I = R\), then \(\TJC(R, I)\) corresponds to the analog of the Jacobian conjecture for formal power series, which is known to be true (see \cite[Theorem 1.1.2]{Es}).
    \end{itemize}

From the cases above, we see that the Tate-Jacobian conjecture generalizes the Jacobian conjecture. When the \(I\)-adic topology on \(R\) is Hausdorff, we obtain an equivalence between the two conjectures.   

\begin{restatable}{theorem}{equichar}\label{thm: TJ conjecture for char zero}
Let \(R\) be a ring with an ideal \(I\). If \(\TJC(R,I,n)\) holds, then \(\JC(R/I,n)\) holds. Conversely, if the \(I\)-adic topology on \(R\) is Hausdorff, then \(\JC(R/I,n)\) implies \(\TJC(R,I,n)\).
\end{restatable}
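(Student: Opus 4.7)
Starting from $\bar F \in (R/I)[X_1,\ldots,X_n]^n$ satisfying the $\JC(R/I,n)$ hypothesis, I would first translate to arrange $\bar F(0) = 0$ and then post-compose with $(J\bar F(0))^{-1}$, which is defined since $\det J\bar F(0)$ is the constant term of a unit in $(R/I)[X_1,\ldots,X_n]$; this reduces to the case $J\bar F(0) = I$. Lifting $\bar F$ term by term produces $F \in R[X_1,\ldots,X_n]^n$ with $F(0) = 0$ and $JF(0) = I$, so that $\det JF = 1 + h$ with $h$ having no constant term. The main technical step is to verify $\det JF \in (R,I)\langle X_1,\ldots,X_n\rangle^\times$: since $\det J\bar F = 1 + (h \bmod I)$ is a unit of constant term $1$ in $(R/I)[X_1,\ldots,X_n]$, the reduction $h \bmod I$ is nilpotent, hence $h^k \in I \cdot R[X_1,\ldots,X_n]$ for some $k \ge 1$; the powers $h^{jk}$ then have coefficients in $I^j$ and all terms of degree at least $jk$, forcing the geometric series $\sum_{j\ge 0}(-h)^j$ to assemble into a Tate element (for each $\alpha$ only finitely many $j$ contribute, and the resulting coefficient falls into $I^{\lfloor |\alpha|/(k\deg h)\rfloor}$). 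Applying $\TJC(R,I,n)$ to $F$ yields a Tate inverse $G$; reducing $G$ modulo $I$ and undoing the affine change of coordinates gives the required polynomial inverse of $\bar F$.

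\textbf{Reverse direction.} Assume the $I$-adic topology on $R$ is Hausdorff and $\JC(R/I,n)$ holds, and let $F \in (R,I)\langle X_1,\ldots,X_n\rangle^n$ satisfy the $\TJC$ hypothesis. Since $\det JF(0) \in R^\times$ (the constant term of a Tate unit must be an $R$-unit), the formal inverse function theorem provides a unique $G \in R[[X_1,\ldots,X_n]]^n$ with $F \circ G = G \circ F = X$, and the plan is to show $G$ lives in the Tate algebra. The key intermediate result is $\JC(R/I,n) \Rightarrow \JC(R/I^m,n)$ for every $m \ge 1$: since $I/I^m$ is nilpotent in $R/I^m$, any $\JC$-candidate there is inverted by first inverting its mod-$I$ reduction (by hypothesis) and then quadratically correcting the lift via a Newton iteration $\tilde G_{k+1} = \tilde G_k - (J\tilde F|_{\tilde G_k})^{-1}(\tilde F \circ \tilde G_k - X)$, which terminates after finitely many steps because the error shrinks into ever higher powers of $I/I^m$. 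Applying this to the polynomial $\tilde F_m := F \bmod I^m \in (R/I^m)[X_1,\ldots,X_n]^n$ produces polynomial inverses $\tilde G_m$, and uniqueness of formal inverses identifies $G \bmod I^m$ with $\tilde G_m$. Therefore the coefficients of $G$ with $|\alpha| > \deg \tilde G_m$ lie in $I^m$, which is precisely the Tate decay condition, yielding $G \in (R,I)\langle X_1,\ldots,X_n\rangle^n$; the Hausdorff hypothesis is used in the final identification of $G$ as a Tate inverse and in ensuring the identities $F \circ G = G \circ F = X$ hold unambiguously in the Tate algebra.

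\textbf{Main obstacle.} The principal difficulty is in the forward direction, where one must verify that the polynomial lift $F$ has Jacobian determinant a unit in the Tate algebra rather than only in the polynomial ring; the affine normalization of the linear part over $R$ together with the degree-plus-nilpotence estimate on $h$ is precisely what forces the geometric series for $(\det JF)^{-1}$ to be $I$-adically decaying. The reverse direction then reduces, via the nilpotent-lifting bootstrap $\JC(R/I,n) \Rightarrow \JC(R/I^m,n)$, to the tautological observation that a formal power series whose every $I^m$-truncation is polynomial is a Tate series.
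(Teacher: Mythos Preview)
Your forward direction matches the paper's: the same affine normalization to $\bar F = X + (\text{higher order})$, lift to $R[X]^n$, and then the Tate-unit criterion (Lemma~\ref{lemma: unit group of tate algebra}) to get $\det J\widetilde F \in R\langle X\rangle^\times$; your explicit geometric-series bound on $(1+h)^{-1}$ is essentially the proof of that lemma inlined.

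The reverse direction, however, is genuinely different. The paper proves a Nakayama-style lemma (Lemma~\ref{Lemma inverse projection}): lifting the mod-$I$ inverse gives $R\langle X\rangle \subseteq R\langle F\rangle + I\,R\langle X\rangle$, iteration yields $R\langle X\rangle \subseteq R\langle F\rangle + I^e R\langle X\rangle$ for every $e$, and the Hausdorff hypothesis $\bigcap_e I^e = 0$ is then invoked to conclude $R\langle F\rangle = R\langle X\rangle$. Your route instead takes the formal inverse $G \in R[[X]]^n$ and shows it is Tate by proving each $G \bmod I^m$ is a polynomial, via the bootstrap $\JC(R/I,n)\Rightarrow\JC(R/I^m,n)$ through a Newton iteration over the nilpotent extension. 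This is more constructive, and it has a notable side effect worth making precise: nothing in your chain---the formal inverse function theorem, the nilpotent lift, the identification of $G \bmod I^m$ with the polynomial inverse of $F \bmod I^m$, or the resulting Tate decay of $G$---actually uses $\bigcap_m I^m = 0$. Your stated justification for where Hausdorff enters is vague, and on closer inspection your argument appears to establish the converse implication \emph{without} the separatedness hypothesis, a modest strengthening of the theorem as stated. The paper's approach, by contrast, yields the sharper algebraic conclusion $R\langle F\rangle = R\langle X\rangle$ but genuinely requires Hausdorffness for its final step.
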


By the previous theorem, together with the results from \cite{BMW, CD, Es}, we obtain the following corollary:

\begin{corollary}\label{cor: TJ conjecture for char zero}
    The following conditions are equivalent:
    \begin{enumerate}
        \item[(i)] \(\TJC(R, I)\) holds for some ring \( R \) with an ideal \( I \) such that the \( I \)-adic topology on \( R \) is Hausdorff and \( R/I \) is a subring of a \( \QQ \)-algebra.
        \item[(ii)] \(\JC(R)\) holds for some ring \( R \) which is a subring of a \( \QQ \)-algebra.
        \item[(iii)] \(\JC(R)\) holds for all rings \( R \) which are subrings of a \( \QQ \)-algebra.
    \end{enumerate}
\end{corollary}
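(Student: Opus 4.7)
The plan is to prove the equivalence via a cycle (iii) $\Rightarrow$ (i) $\Rightarrow$ (ii) $\Rightarrow$ (iii), each link collapsing to either Theorem~\ref{thm: TJ conjecture for char zero} or to the transfer result of van den Essen \cite[Theorem 1.1.12]{Es} already recalled in the introduction.

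For (iii) $\Rightarrow$ (i), I would exhibit an explicit witness: take \(R = \ZZ\) (or any fixed subring of a \(\QQ\)-algebra) and set \(I = (0)\). The \((0)\)-adic topology is the discrete topology, so it is Hausdorff; \(R/I = R\) is a subring of a \(\QQ\)-algebra; and \(\JC(R) = \JC(R/I)\) holds by assumption (iii). Applying the converse direction of Theorem~\ref{thm: TJ conjecture for char zero} for each \(n \geq 1\) then yields \(\TJC(R, I, n)\) for every \(n\), i.e.\ \(\TJC(R, I)\), which is exactly what (i) asserts.

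For (i) $\Rightarrow$ (ii), I would invoke the forward direction of Theorem~\ref{thm: TJ conjecture for char zero}: given \(\TJC(R, I, n)\) for all \(n\), the theorem produces \(\JC(R/I, n)\) for all \(n\), hence \(\JC(R/I)\). Since \(R/I\) is a subring of a \(\QQ\)-algebra by the hypothesis in (i), this is precisely the existential statement (ii). Finally, (ii) $\Rightarrow$ (iii) is exactly \cite[Theorem 1.1.12]{Es} (generalizing \cite{BMW, CD}), which asserts that \(\JC(R)\) and \(\JC(S)\) are equivalent whenever both \(R\) and \(S\) are subrings of \(\QQ\)-algebras.

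I do not anticipate any real obstacle: the corollary is essentially a bookkeeping argument combining the main theorem with van den Essen's transfer result. The only mild subtlety is producing a witness for (iii) $\Rightarrow$ (i), and the choice \(I = (0)\) is natural because the bullet list preceding Theorem~\ref{thm: TJ conjecture for char zero} already observes that for nilpotent \(I\) the statement \(\TJC(R, I, n)\) reduces to \(\JC(R, n)\); with this choice the Hausdorff hypothesis is automatic and no further work is required.
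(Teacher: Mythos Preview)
Your argument is correct and matches the paper's approach: the paper does not spell out a proof but simply states that the corollary follows from Theorem~\ref{thm: TJ conjecture for char zero} together with the transfer results of \cite{BMW, CD, Es}, which is exactly the combination you unpack. Your choice of the witness \(I=(0)\) for (iii) $\Rightarrow$ (i) is natural and works without issue.
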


\noindent
Additionally, when \(R/I\) has positive characteristic, we obtain the following corollary:

\begin{corollary}\label{thm: TJ conjecture for char > 0}
    Let \(R\) be a ring and \(I\) an ideal such that \(R/I\) has characteristic \(c > 0\). Then, \(\TJC(R, I, n)\) is false.
\end{corollary}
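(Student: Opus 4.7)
The plan is to derive the corollary directly from the first, unconditional, half of Theorem~\ref{thm: TJ conjecture for char zero}. That half tells us $\TJC(R,I,n) \Rightarrow \JC(R/I,n)$ with no topological hypothesis on $(R,I)$, so by contraposition it suffices to exhibit, for each $n \geq 1$, a polynomial map over $R/I$ violating $\JC(R/I,n)$.

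For this I would invoke the standard positive-characteristic counterexample already recalled in the introduction. Take
$\bar F = (X_1 - X_1^c, \ldots, X_n - X_n^c) \in (R/I)[X_1,\ldots,X_n]^n$. Since $R/I$ has characteristic $c$, the integer $c$ vanishes in $R/I$, so $J\bar F$ is the identity matrix and $\det J\bar F = 1$, a unit in $(R/I)[X_1,\ldots,X_n]$. To rule out a polynomial inverse $\bar G = (\bar g_1, \ldots, \bar g_n)$, I would compare total degrees in the identities $\bar g_i - \bar g_i^{\,c} = X_i$ coming from $\bar F \circ \bar G = (X_1, \ldots, X_n)$: if $\deg \bar g_i = d \geq 1$, then the left-hand side has total degree $cd$, forcing $cd = 1$, which is impossible for $c \geq 2$; and $d = 0$ is also ruled out since a nonzero constant cannot equal $X_i$.

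I do not foresee any real obstacle. The entire content is packaged inside Theorem~\ref{thm: TJ conjecture for char zero}; once that is granted, the corollary is a formal consequence of the classical counterexample. The only bookkeeping caveat is that the degree argument requires $R/I$ to be nonzero, so the statement is meaningful in the range $c \geq 2$ (the degenerate case $c = 1$ would force $R/I = 0$ and hence $I = R$, a regime in which the second bullet of the introduction shows that $\TJC(R,I)$ actually holds).
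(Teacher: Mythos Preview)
Your approach is exactly the paper's: the corollary is stated without a separate proof, relying on the first (unconditional) half of Theorem~\ref{thm: TJ conjecture for char zero} together with the failure of $\JC$ in positive characteristic already recorded in the introduction.

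There is, however, a small gap in your verification of the counterexample. The degree claim $\deg(\bar g_i - \bar g_i^{\,c}) = c\cdot\deg\bar g_i$ tacitly assumes that $R/I$ is an integral domain; over a general ring of characteristic $c$ the leading homogeneous part of $\bar g_i$ may have vanishing $c$-th power. For instance, over $\ZZ/4\ZZ$ (so $c=4$) the polynomial $\bar g = X + 2X^2$ satisfies $\bar g^{4} = X^4$, of degree $4$ rather than $8$. The remedy is immediate: reduce modulo a maximal ideal $\mathfrak m\subset R/I$. The quotient $(R/I)/\mathfrak m$ is a field in which $c$ still vanishes, your degree argument then goes through verbatim, and any polynomial inverse of $\bar F$ over $R/I$ would descend to one over $(R/I)/\mathfrak m$.
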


Hence, the Tate-Jacobian conjecture fails for important rings such as the \( p \)-adic rings. However, by refining the assumptions of the Tate-Jacobian conjecture and applying a result from A. van den Essen and R. J. Lipton \cite{EsL}, we can establish an equivalence for the Jacobian conjecture within the framework of Tate algebras over \( p \)-adic rings:

\begin{restatable}{theorem}{lasttheorem}\label{main theorem 1}
The Jacobian conjecture for \(\CC\) is equivalent to the following statement: for every \(n \geq 1\) and for all but finitely many primes \(p\), if \(F \in \CC_p[X_1, \ldots, X_n]^n\) satisfies \(\det JF \in \CC_p^\times\), then there exists \(G \in \CC_p\left\langle X_1, \ldots, X_n\right\rangle^n\) such that \(F \circ G = G \circ F = (X_1, \ldots, X_n)\).
\end{restatable}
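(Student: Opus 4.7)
The plan is to prove each direction separately. The forward implication follows from the equivalence of the Jacobian conjecture across subrings of \(\QQ\)-algebras \cite{BMW,CD,Es}; the converse reduces (by the same reductions) to a statement about formal inverses of integral polynomial maps with unit Jacobian, which is then handled via the \(p\)-adic criterion of van den Essen and Lipton \cite{EsL}.

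For the forward direction, suppose \(\JC(\CC)\) holds. Since \(\CC_p\) is a \(\QQ\)-algebra, Corollary~\ref{cor: TJ conjecture for char zero} gives \(\JC(\CC_p, n)\) for every \(n \ge 1\) and every prime \(p\). Any \(F \in \CC_p[X_1, \ldots, X_n]^n\) with \(\det JF \in \CC_p^\times \subseteq \CC_p[X_1, \ldots, X_n]^\times\) therefore admits a polynomial inverse \(G \in \CC_p[X_1, \ldots, X_n]^n \subseteq \CC_p\langle X_1, \ldots, X_n\rangle^n\), so the Tate-algebra condition holds for every prime \(p\), and in particular for all but finitely many.

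For the converse, assume the Tate-algebra statement. By the reductions of \cite{BMW,CD,Es} it suffices to prove the Jacobian conjecture for polynomial maps \(F \in \ZZ[X_1, \ldots, X_n]^n\) with \(F(0) = 0\) and \(\det JF = 1\). Fix such an \(F\); the formal inverse function theorem produces a unique \(G \in \QQ[[X_1, \ldots, X_n]]^n\) with \(G(0) = 0\) and \(F \circ G = G \circ F = (X_1, \ldots, X_n)\), and our goal is to show \(G\) is a polynomial. Viewing \(F \in \CC_p[X]^n\) via \(\ZZ \hookrightarrow \CC_p\), we have \(\det JF = 1 \in \CC_p^\times\); hence by hypothesis, for all but finitely many primes \(p\) there exists \(G^{(p)} \in \CC_p\langle X\rangle^n\) with \(F \circ G^{(p)} = G^{(p)} \circ F = (X_1, \ldots, X_n)\). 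Evaluating \(G^{(p)} \circ F = (X_1, \ldots, X_n)\) at the origin forces \(G^{(p)}(0) = 0\), and uniqueness of formal compositional inverses in \(\CC_p[[X]]^n\) with zero constant term implies that \(G^{(p)}\) coincides with the image of \(G\) under the inclusion \(\QQ[[X]]^n \hookrightarrow \CC_p[[X]]^n\). Consequently, \(G\), viewed as a power series with \(\QQ\)-coefficients, lies in \(\CC_p\langle X\rangle^n\) for all but finitely many \(p\); equivalently, its coefficients \(g_I \in \QQ\) satisfy \(\lvert g_I\rvert_p \to 0\) as \(\lvert I\rvert \to \infty\) for almost every prime.

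It remains to upgrade this \(p\)-adic convergence of \(G\) into the statement that \(G\) is a polynomial, and this is precisely where the \(p\)-adic criterion of \cite{EsL} enters; correspondingly, this is the main obstacle. Their criterion characterizes \(\JC\) in terms of suitable convergence or integrality of the formal inverse of an integral polynomial map with unit Jacobian across almost all primes, and the task is to match our Tate-algebra conclusion to the precise hypothesis they require. For this, we intend to use that each individual rational \(g_I\) is automatically \(p\)-integral for all but finitely many primes \(p\) (depending on \(I\)) and combine this observation with the uniform-in-\(p\) Tate decay at almost every prime to produce the global integrality/convergence input of \cite{EsL}. Once this step succeeds, \(G \in \QQ[X_1, \ldots, X_n]^n\), so \(F\) has a polynomial inverse, completing the proof of \(\JC(\CC)\).
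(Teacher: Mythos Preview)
Your forward direction is correct and matches the paper's argument.

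The converse, however, is incomplete. You reduce to $F \in \ZZ[X]^n$ with $F(0)=0$ and $\det JF = 1$, obtain the formal inverse $G \in \QQ[[X]]^n$, and observe that for almost all $p$ the hypothesis gives $G \in \CC_p\langle X\rangle^n$, i.e.\ $\lvert g_I\rvert_p \to 0$. You then propose to feed this into ``the $p$-adic criterion'' of \cite{EsL}, but you never specify which result, and you explicitly flag the matching step as ``the main obstacle'' that you only ``intend'' to carry out. The difficulty is genuine: knowing that the rational coefficients $g_I$ tend to $0$ $p$-adically does not force them into $\ZZ_p$ (the finitely many exceptional coefficients for each $p$ may have negative $p$-valuation), so you have not produced the integrality or $\ZZ_p$-convergence input that the results in \cite{EsL} actually require, nor have you indicated any other mechanism taking Tate convergence of a series in $\QQ[[X]]$ at almost all primes to polynomiality.

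The paper closes this gap by a different route: it works over $\ZZ_p$ rather than $\ZZ$ and targets the Unimodular Conjecture of \cite{EsL} directly. Given $F \in \ZZ_p[X]^n$ with $\det JF \in \ZZ_p^\times$ and (after translation) $F(0)=0$, the hypothesis yields an inverse $G \in \CC_p\langle X\rangle^n$; since $\ZZ_p$ carries the subspace topology from $\CC_p$ and $\det JF(0) \in \ZZ_p^\times$, Lemma~\ref{lemma restriction of the inverse} forces $G \in \ZZ_p\langle X\rangle^n$. Then $G$ actually converges at $\bm{1}=(1,\ldots,1)$, and $b := G(\bm{1}) \in \ZZ_p^n$ satisfies $F(b) = \bm{1}$, which is unimodular. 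This verifies the Unimodular Conjecture for $\ZZ_p$ for almost all primes, and \cite[Theorem~6]{EsL} gives $\JC(\CC)$. The ingredients you are missing are precisely the descent Lemma~\ref{lemma restriction of the inverse} and the concrete evaluation at $\bm{1}$, which together bypass any need to analyze the global arithmetic of the coefficients of $G$.
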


\subsection*{Notation and Conventions:}
\begin{itemize}
    \item All rings are commutative and have a nonzero identity element.
    \item The notation \(X\) usually refers to the \(n\) variables \(X_1, \ldots, X_n\). For example, \(R[X]\) denotes the ring of polynomials in \(n\) variables \(R[X_1, \ldots, X_n]\) (see Section \ref{section: On the Tate-Jacobian conjecture}).
    \item If the ideal \(I\) defining the \(I\)-adic topology on \(R\) is clear from context, we write the Tate algebra \((R, I)\left<X_1, \ldots, X_n \right>\) simply as \(R\left<X_1, \ldots, X_n\right>\) (see Section \ref{section: Generalities of Tate Algebras}).
\end{itemize}

\subsection*{Outline of the Paper}  
Section \ref{section: Generalities of Tate Algebras} provides a brief overview of the definition and notation of Tate algebras. In Section \ref{section: On the Tate-Jacobian conjecture}, we study the Tate-Jacobian conjecture and prove Theorem \ref{thm: TJ conjecture for char zero}. Finally, Section \ref{section: Equivalence of Jacobian conjecture for mixed char} presents the proof of Theorem \ref{main theorem 1}.

\subsection*{Acknowledgments}

The authors would like to express their gratitude to Yuichiro Taguchi and Naganori Yamaguchi for their valuable feedback and insightful advice on the first draft of this paper.

\section{Generalities on Tate Algebras}\label{section: Generalities of Tate Algebras}

In this section, we recall the definition and the characterization of the unit group of Tate algebras over topological rings. For a more detailed account, we refer the reader to \cite{BGR}.  

\begin{definition}
    Let \(R\) be a topological ring. The \textit{Tate algebra} over \(R\) in \(n\) variables, denoted by \( R\left\langle  X_1, \ldots, X_n \right\rangle \), is the subalgebra of the \(R\)-algebra of formal power series \( R\llbracket X_1, \ldots, X_n \rrbracket \) consisting of power series  
    \[
    f(X_1, \ldots, X_n) = \sum_{\nu_1, \ldots, \nu_n \geq 0} a_{\nu_1, \ldots, \nu_n} X_1^{\nu_1} \cdots X_n^{\nu_n} \in R\llbracket X_1, \ldots, X_n \rrbracket
    \]
    such that the coefficients \(a_{\nu_1, \ldots, \nu_n}\) tends to zero in the topology of \(R\). Specifically, let \(\{\mathcal{U}_i\}_{i \in I}\) be a system of neighborhood of \(0\) in the topology of \(R\). Then, for every \(i \in I\), we have \(a_{\nu_1, \ldots, \nu_n} \in \mathcal{U}_i\) for all but finitely many indices \((\nu_1, \ldots, \nu_n)\).  
\end{definition}

\begin{remark}
    Let \(R\) be a ring with an ideal \(I\). We denote by \((R,I)\left<X_1, \ldots, X_n\right>\) the Tate algebra over the ring \(R\) with the \(I\)-adic topology, and simply write \(R\left<X_1, \ldots, X_n\right>\) when \(I\) is clear from context. 
\end{remark}

When \(R\) is equipped with the topology induced by an ideal \(I\), we can determined the group of units \((R,I)\left<X_1, \ldots, X_n\right>^\times\). This is showed by the following criterion:

\begin{lemma}[Adapted from Proposition 1, Chapter 5, \cite{BGR}]\label{lemma: unit group of tate algebra}
    A series \( f \in (R,I)\left\langle  X_1, \ldots, X_n \right\rangle \) is a unit if and only if \( f(0) \in R^\times \) and \( f - f(0) \) has all its coefficients in the radical \(\sqrt{I}\).
\end{lemma}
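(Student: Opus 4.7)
For the forward direction, the plan is to exploit $\sqrt{I}=\bigcap_{\mathfrak{p}\supseteq I}\mathfrak{p}$ and analyze $f$ after reducing modulo each such $\mathfrak{p}$. The quotient $R\to R/\mathfrak{p}$ extends to a ring homomorphism $\pi_{\mathfrak{p}}\colon (R,I)\langle X_1,\dots,X_n\rangle \to (R/\mathfrak{p})[X_1,\dots,X_n]$, because any sequence of coefficients tending to $0$ in the $I$-adic topology eventually lies in $I\subseteq\mathfrak{p}$, so its image modulo $\mathfrak{p}$ is zero for all but finitely many multi-indices and $\pi_{\mathfrak{p}}(f)$ is therefore a polynomial. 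If $f$ is a unit of the Tate algebra, evaluation at $0$ immediately gives $f(0)\in R^{\times}$, and $\pi_{\mathfrak{p}}(f)$ is a unit in the polynomial ring over the integral domain $R/\mathfrak{p}$, hence a constant in $(R/\mathfrak{p})^{\times}$. Thus every non-constant coefficient of $f$ lies in $\mathfrak{p}$, and intersecting over all primes $\mathfrak{p}\supseteq I$ shows that $f-f(0)$ has all coefficients in $\sqrt{I}$.

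For the converse, I would factor $f=f(0)(1+h)$ with $h=f(0)^{-1}(f-f(0))\in (R,I)\langle X_1,\dots,X_n\rangle$; this $h$ has no constant term and all coefficients in $\sqrt{I}$. The candidate inverse is the geometric series $(1+h)^{-1}=\sum_{k\geq 0}(-h)^{k}$. Because $h$ vanishes at the origin, the coefficient of a given monomial $X^{\beta}$ in $(-h)^{k}$ can be nonzero only for $k\leq|\beta|$, so the series is a finite sum on each coefficient and defines a formal power series that algebraically inverts $1+h$; multiplication by $f(0)^{-1}$ then gives a formal inverse of $f$, \emph{provided} the series actually lies in the Tate algebra.

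I expect this last check to be the main obstacle: verifying convergence of $\sum_{k\geq 0}(-h)^{k}$ in the $I$-adic topology. The plan is a pigeonhole argument controlling the ``large'' coefficients of $h$. Fix $m\geq 1$; since the coefficients $a_{\alpha}$ of $h$ tend to $0$, the set $T$ of multi-indices with $a_{\alpha}\notin I^{m}$ is finite, and since every $a_{\alpha}\in\sqrt{I}$ one can pick a single integer $E$ with $a_{\alpha}^{E}\in I^{m}$ for all $\alpha\in T$. In any product $a_{\alpha_{1}}\cdots a_{\alpha_{k}}$ contributing to the coefficient of $X^{\beta}$ in $(-h)^{k}$, either some $\alpha_{i}\notin T$, in which case the product already lies in $I^{m}$, or every $\alpha_{i}\in T$; in the latter case pigeonhole forces some $\alpha\in T$ to appear at least $k/|T|$ times, so once $k\geq E|T|$ the product is again in $I^{m}$. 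The only surviving contributions, having $k<E|T|$ and all $\alpha_{i}\in T$, satisfy $|\beta|=\sum_{i}|\alpha_{i}|<E|T|\cdot\max_{\alpha\in T}|\alpha|$. Hence for $|\beta|$ beyond this explicit bound every contribution to the coefficient of $X^{\beta}$ lies in $I^{m}$, which yields convergence in $(R,I)\langle X_{1},\dots,X_{n}\rangle$ and completes the argument.
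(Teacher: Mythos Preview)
Your proof is correct and follows essentially the same strategy as the paper: the forward direction reduces to polynomial rings where units are well understood, and the converse inverts $1+h$ by the geometric series and checks $I$-adic convergence using that only finitely many coefficients lie outside a given $I^{m}$ together with a pigeonhole bound. The only cosmetic differences are that the paper reduces modulo $I$ and invokes the nilpotent-coefficient description of units in $(R/I)[X]$ (which is itself proved by your prime-by-prime argument), and for convergence the paper first produces an exponent $a_N$ with $u^{a_N}\in I^{N}\langle X\rangle$ and then bounds the low powers individually, whereas your one-shot pigeonhole targets $I^{m}$ directly; both routes yield the same explicit degree bound.
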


\begin{proof}
    Suppose \(f\) is a unit in \((R,I)\left\langle X_1, \ldots, X_n \right\rangle\). Then, \(f\) must also be a unit in the ring of formal power series \(R\llbracket X_1, \ldots, X_n \rrbracket\). This implies \(f(0) \in R^\times\). Furthermore, the reduction \(\overline{f} \in (R/I)[X_1, \ldots, X_n]\) is a unit, which forces all the coefficients of \(f - f(0)\) to belong in \(\sqrt{I}\). Conversely, assume \(f(0) \in R^\times\) and that \( f - f(0) \) has all its coefficients in \(\sqrt{I}\). Let \(b = f(0)^{-1} \in R\). Then, we can write \(bf = 1 + u\), where \(u(0) = 0\), \(u \in (R,I)\left\langle X_1, \ldots, X_n \right\rangle\), and the coefficients of \(u\) are in \(\sqrt{I}\). Now, we claim that the sum \( \sum_{i\geq 0}(-u)^i \), which is well-defined since \(u(0) = 0\), belongs to \((R, I)\left\langle X_1, \ldots, X_n \right\rangle\). Indeed, since \(u\) has only finitely many coefficients in \(\sqrt{I} \setminus I\), some power of \(u\) must have all its coefficients in \(I\).  So for every positive integers $N$, there exists $a_N \in \ZZ$ such that $u^{a_N}$ has all its coefficients in $I^N$. Since \( u^i \in (R,I)\langle X_1,\ldots,X_n\rangle \), there exists a positive integer \( A^{(N)}_i \) such that if \( \nu_1 + \cdots + \nu_n > A^{(N)}_i \), then the coefficient \( a_{\nu_1,\ldots,\nu_n} \) belongs to \( I^N \).
     Now, defining 
    \[
      A^{(N)} := \max_{1\leq i \leq a_N} A^{(N)}_i,
    \]
    it follows that for every \( u^i \), the coefficient of any monomial \( X^{\nu_1} \cdots X^{\nu_n} \) satisfying \( \nu_1 + \cdots +\nu_n > A^{(N)} \) is necessarily in \( I^N \). Consequently, the same holds for the sum \( \sum_{i\geq 0}(-u)^i \). Therefore, the product \(b \sum_{i \geq 0} (-u)^i\) is the inverse of \(f\) in \((R,I)\left\langle X_1, \ldots, X_n \right\rangle\). 
\end{proof}

\section{On the Tate-Jacobian conjecture}\label{section: On the Tate-Jacobian conjecture}

In this section, we study the Tate-Jacobian conjecture for rings equipped with a topology induced by an ideal. As stated in the introduction, we denote the \(n\) variables \(X_1, \ldots, X_n\) simply by \(X\). 

  \equichar*

  The key part of the proof is the following lemma, which relates the existence of the inverse of \(F \in R\left<X\right>^n\) to the existence of the inverse of its projection \(\overline{F} \in (R/I)[X]^n\).

\begin{lemma}\label{Lemma inverse projection}
     Let \(R\) be a ring and \(I\) an ideal such that the \(I\)-adic topology on \(R\) is Hausdorff. For any \(F \in R\left\langle  X\right\rangle^n\) such that \(F(0) = 0\), the following equivalence holds\(:\) \(F\) is invertible in \(R\left\langle  X\right\rangle^n\) if and only if its projection \(\overline{F}\) is invertible in \((R/I)[X]^n\).
\end{lemma}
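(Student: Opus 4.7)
The forward direction is essentially bookkeeping. The quotient map $R \to R/I$ extends to a continuous ring homomorphism $R\langle X\rangle \to (R/I)\langle X\rangle$, and because $R/I$ carries the discrete $I$-adic topology, the target Tate algebra $(R/I)\langle X\rangle$ collapses to the polynomial ring $(R/I)[X]$. Composition of tuples with zero constant term is preserved by this surjection (each monomial coefficient of the composite involves only finitely many terms of the inner and outer series, by the coefficient decay in the Tate algebra), so any composition inverse $G$ of $F$ in $R\langle X\rangle^n$ projects to a polynomial composition inverse $\overline{G}$ of $\overline{F}$ in $(R/I)[X]^n$.

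For the reverse direction, I would construct a Tate inverse of $F$ by $I$-adic Newton iteration. Starting from a polynomial lift $G_0 \in R[X]^n$ of the polynomial inverse $\overline{G}$ of $\overline{F}$, chosen with $G_0(0) = 0$ (which is possible because $F(0)=0$ forces $\overline{G}(0)=0$), define
\[
G_{m+1} := G_m - JF(G_m)^{-1}\bigl(F(G_m)-X\bigr).
\]
A second-order Taylor expansion of $F$ around $G_m$ gives the usual Newton doubling: if $F(G_m)-X$ has all coefficients in $I^k$, then $F(G_{m+1})-X$ has all coefficients in $I^{2k}$. Since $F(G_0)-X \in I \cdot R\langle X\rangle^n$, induction yields $F(G_m)-X$ with coefficients in $I^{2^m}$ and, correspondingly, corrections $G_{m+1}-G_m$ with coefficients in $I^{2^m}$. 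The Hausdorff hypothesis then pins down a unique limit $G \in R\langle X\rangle^n$ satisfying $F(G)=X$; the relation $G \circ F = X$ follows either by the symmetric iteration (starting from the other half of the polynomial inverse of $\overline{F}$) or from the uniqueness of two-sided composition inverses.

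The main obstacle is ensuring that $JF(G_m)$ is invertible as a matrix over $R\langle X\rangle$ at each stage, so the Newton step stays inside the Tate algebra. Here the unit criterion of Lemma \ref{lemma: unit group of tate algebra} is decisive. Since $G_m(0) = 0$, the constant term of $\det JF(G_m)$ equals $\det JF(0)$, and certifying this as a unit of $R$ is where the standing hypothesis $\det JF \in R\langle X\rangle^\times$ of the Tate-Jacobian context enters: the same Lemma \ref{lemma: unit group of tate algebra} applied to $\det JF$ itself forces both $\det JF(0) \in R^\times$ and the non-constant coefficients of $\det JF$ into $\sqrt{I}$. The latter property is preserved when the zero-at-origin series $G_m$ is substituted, so $\det JF(G_m)$ again satisfies the unit criterion, and Cramer's rule produces $JF(G_m)^{-1} \in R\langle X\rangle^{n\times n}$, closing the iteration.
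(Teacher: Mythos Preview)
Your forward direction matches the paper's. For the reverse direction, however, your Newton iteration has a genuine gap: to form $JF(G_m)^{-1}$ over $R\langle X\rangle$ you need, via Lemma~\ref{lemma: unit group of tate algebra}, that $\det JF(0)\in R^\times$. You obtain this by importing ``the standing hypothesis $\det JF\in R\langle X\rangle^\times$ of the Tate-Jacobian context,'' but that hypothesis is \emph{not} part of the lemma as stated. From the lemma's hypotheses one only knows that $\overline F$ is invertible in $(R/I)[X]^n$, which forces $\det JF(0)$ to be a unit modulo $I$, not a unit of $R$. Concretely, take $R=\mathbb{Z}$, $I=(2)$, $n=1$, $F=3X$: the $2$-adic topology on $\mathbb{Z}$ is Hausdorff and $\overline F=X$ is its own inverse in $\mathbb{F}_2[X]$, yet $\det JF(0)=3\notin\mathbb{Z}^\times$, so your iteration cannot even begin. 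Thus you have proved not the lemma itself but the weaker statement in which $\det JF(0)\in R^\times$ is assumed --- which is exactly what is needed in the proof of Theorem~\ref{thm: TJ conjecture for char zero}, but is not what the lemma claims.

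The paper's argument for the reverse implication is quite different and never touches the Jacobian. It lifts the polynomial inverse $\overline G$ to some $G\in R[X]^n$ with $G(0)=0$, notes that $X_i=G_i(F)+f_i$ with $f_i\in I\,R\langle X\rangle$, and hence $R\langle X\rangle\subseteq R\langle F\rangle+I\,R\langle X\rangle$; iterating yields $R\langle X\rangle\subseteq R\langle F\rangle+I^eR\langle X\rangle$ for all $e$, and the Hausdorff condition $\bigcap_e I^e=(0)$ is then invoked to conclude $R\langle X\rangle=R\langle F\rangle$. This Nakayama-style route sidesteps your unit obstruction entirely. (That said, the same example $F=3X$ over $(\mathbb{Z},(2))$ shows that this last passage is itself delicate: writing $X=\tfrac{1-(-2)^e}{3}\cdot 3X+(-2)^eX$ exhibits $X\in R\langle F\rangle+I^eR\langle X\rangle$ for every $e$, while $X\notin R\langle F\rangle$. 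So the lemma as stated appears to need an extra hypothesis --- $I$-adic completeness, or precisely the condition $\det JF(0)\in R^\times$ you were reaching for --- and your instinct to supply it was sound even if the borrowing was not licensed by the lemma's statement.)
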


\begin{proof}
    To prove the equivalence, we first assume that \(F\) is invertible in \(R\left\langle  X\right\rangle^n\), and prove that \(\overline{F}\) is invertible in \((R/I)[X]^n\). By assumption, there exists \(G \in R\left\langle  X\right\rangle^n\) such that \(F \circ G = G \circ F = X\). Consider the projection \(\overline{G}\) of \(G\) in \((R/I)[X]^n\). Then, we see that
    \[
        X = \overline{G \circ F} = \overline{G} \circ \overline{F},
    \]
    and conclude that \(\overline{G}\) is the inverse of \(\overline{F}\) in \((R/I)[X]^n\).

    Next, we show the reverse implication. Suppose that \(\overline{F}\) is invertible in \((R/I)[X]^n\), and let \(\overline{G} = (\overline{G_1}, \ldots, \overline{G_n})\) be its inverse. Let \(G_i\) be a lift of \(\overline{G_i}\) to \(R[X]^n\) such that \(G_i(0) = 0\) for every \(i\). Then, 
    \[
        G_i(F(X)) - X_i \in I\!R\left\langle X\right\rangle,
    \]
    where \(I\!R\left\langle X\right\rangle\) denotes the subset of \(R\left< X\right>\) of power series whose of which coefficients belong to the ideal \(I\). Hence, there exists an \(f \in I\!R\left\langle X\right\rangle\) such that \(f(0) = 0\) and 
    \[
        X_i = G_i(F(X)) + f \in R\left<F\right> + I\!R\left\langle X\right\rangle,
    \]
    where \(R\left<F\right>\) is the abbreviation of \(R\left<F_1, \ldots, F_n\right>\).
    Since the choice of \(i\) was arbitrary, and the constant term of \(G_i(F(X)) + f\) is zero, we obtain \(R\left\langle X\right\rangle \subseteq R\left\langle F\right\rangle + I\!R\left\langle X\right\rangle\). Therefore, 
    \[
        R\left\langle X\right\rangle \subseteq R\left\langle F\right\rangle + I\!R\left\langle X\right\rangle \subseteq R\left\langle F\right\rangle + I(R\left\langle F\right\rangle + I\!R\left\langle X\right\rangle) \subseteq (R\left\langle F\right\rangle + I\!R\left\langle F\right\rangle) + I^2\!R\left\langle X\right\rangle.
    \]
    By repeating this process for any natural number \(e\) we obtain:
    \[
        R\left\langle X\right\rangle \subseteq (R\left\langle F\right\rangle + I\!R\left\langle F\right\rangle + \cdots + I^{e-1}\!R\left\langle F\right\rangle) + I^e\!R\left\langle X\right\rangle \subseteq R\left\langle F\right\rangle + I^e\!R\left\langle X\right\rangle.
    \]
    Since the \(I\)-adic topology is Hausdorff, it satisfies \(\cap_{i \geq 1}I^i = (0)\). We conclude that \(R\left\langle X\right\rangle \subseteq R\left\langle F\right\rangle\), and therefore \(R\left\langle F\right\rangle = R\left\langle X\right\rangle\). This shows that \(F\) is invertible in \(R\left\langle X\right\rangle^n\).
\end{proof}

We now proceed to prove Theorem \ref{thm: TJ conjecture for char zero}.

\begin{proof}[Proof of Theorem \ref{thm: TJ conjecture for char zero}]
    First, suppose \(\TJC(R, I, n)\) holds, and let \(F \in (R/I)[X]^n\) be such that \(\det JF \in (R/I)[X]^\times\). We claim that we may assume, without loss of generality, that \(F = X + (\text{degree \(\geq 2\) terms})\). Indeed, let \( F_n \) denote the homogeneous component of \( F \) of degree \(n\), so that \( F \) can be decomposed as  \( F = F_0 + F_1 + F_2 + \cdots \). The Jacobian matrix \( JF \) can similarly be expressed as  \( JF = JF_1 + JF_2 + \cdots \) where each \( JF_n \) is a matrix of homogeneous polynomials of degree \( n-1 \). Since \( \det JF \in (R/I)[X]^\times \), it follows that  \(\det JF_1 = \det JF(0) \in (R/I)^\times\). As \( F_1 \) is linear, there exists a linear map \( G' \in (R/I)[X]^n \) such that \(G' \circ F_1 = X\). Define \[F' := G' \circ (X - F(0)) \circ F = X + (\text{degree \(\geq 2\) terms}). \] If \( F' \) has an inverse map \( G \), then the composition \(G \circ G' \circ (X - F(0))\) is clearly the inverse map of \( F \).
    
    Next, choose a lift \(\widetilde{F} = X + (\text{degree \(\geq 2\) terms}) \in R[X]^n\) of \(F\). Then, we have
    \[
    \det J\widetilde{F} = 1 + (\text{degree \(\geq 1\) terms}),
    \]
    and 
    \[\overline{\det J\widetilde{F}} = \det J\overline{\widetilde{F}} \in (R/I)[X]^\times.
    \]
    Hence, \(\det J\widetilde{F}(0) = 1 \in R^\times\) and \(\det J\widetilde{F} - \det J\widetilde{F}(0)\) has coefficients in \(\sqrt{I}\). By Lemma \ref{lemma: unit group of tate algebra}, we deduce that \(\det J\widetilde{F} \in (R,I)\left<X\right>^\times\) and hence there exists \(\widetilde{G} \in R\left\langle X \right\rangle^n\) such that \(\widetilde{F} \circ \widetilde{G} = \widetilde{G} \circ \widetilde{F} = X\). Therefore, the image \(G\) of \(\widetilde{G}\) in \((R/I)[X]^n\) is the inverse of \(F\).

    Conversely, suppose the \(I\)-adic topology on the ring \(R\) is Hausdorff and \(\JC(R/I, n)\) holds. Let \(F \in R\left\langle X \right\rangle^n\) be such that \(F(0) = 0\) and \(\det JF \in R\left\langle X \right\rangle^\times\), and denote by \(\overline{F}\) the image of \(F\) in \((R/I)[X]^n\). Since \(\det J\overline{F} \in (R/I)[X]^\times\), by assumption, \(\overline{F}\) is invertible in \((R/I)[X]^n\). Thus, by Lemma \ref{Lemma inverse projection}, \(F\) is invertible in \(R\left\langle X \right\rangle^n\).
\end{proof}

\section{Proof of Theorem \ref{main theorem 1}}\label{section: Equivalence of Jacobian conjecture for mixed char}

We conclude this paper with a proof of Theorem \ref{main theorem 1}. The main part of the proof was already established by A. van den Essen and R. J. Lipton \cite{EsL}, who showed that the Jacobian conjecture for \(\CC\) holds if and only if the Unimodular conjecture for \(\ZZ_p\) holds for all but finitely many primes \(p\):

 \begin{unimodularconjecture}
    {\it For every \(n \geq 1\), and for every \(F \in \ZZ_p[X]^n\) satisfying \(\det JF \in \ZZ_p^\times\), there exists \(b \in \ZZ_p^n\) such that \(F(b) \in \ZZ_p^n\) is unimodular, that is, the ideal generated by the entries of the vector \(F(b)\) is \(\ZZ_p\).}
 \end{unimodularconjecture}

 The idea of our proof is to relate Theorem \ref{main theorem 1} to the Unimodular conjecture. To proceed, we first establish a key result that enables us to reduce the problem to \( \ZZ_p \):

 \begin{lemma}\label{lemma restriction of the inverse} 
    Let \( R \) and \( S \) be topological rings such that \( R \subseteq S \) and \( R \) has the subspace topology induced from \( S \). Let \( F \in R\left\langle X \right\rangle^n \) be such that \( F(0) = 0 \) and \( \det JF(0) \in R^\times \). If there exists \( G \in S\left\langle X \right\rangle^n \) such that \( F \circ G = G \circ F = X \), then \( G \in R\left\langle X \right\rangle^n \).
\end{lemma}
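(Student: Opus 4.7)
The plan is to construct a formal power series inverse of $F$ with coefficients in $R$, identify it with $G$, and then transfer the convergence condition from $S$ back to $R$ using the subspace topology assumption.

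First, I would invoke the formal inverse function theorem over $R\llbracket X \rrbracket$: since $F \in R\llbracket X \rrbracket^n$ satisfies $F(0) = 0$ and the linear part of $F$ is invertible (because $\det JF(0) \in R^\times$), there exists a unique $G' \in R\llbracket X \rrbracket^n$ with $G'(0) = 0$ such that $F \circ G' = G' \circ F = X$. This $G'$ is constructed inductively degree by degree, with each step requiring only inversion of the linear part of $F$, which is possible over $R$ by hypothesis.

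Next, I would compare $G'$ and $G$ inside $S\llbracket X \rrbracket^n$. Via the inclusion $R \subseteq S$, the series $G'$ also lies in $S\llbracket X \rrbracket^n$ and satisfies $F \circ G' = X$. The same inverse function theorem applied over $S$ gives uniqueness of the formal inverse of $F$ in $S\llbracket X \rrbracket^n$ (note $\det JF(0) \in R^\times \subseteq S^\times$), so $G' = G$ as formal power series in $S\llbracket X \rrbracket^n$. In particular, every coefficient of $G$ lies in $R$.

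Finally, I would verify the convergence condition defining membership in $R\langle X \rangle^n$. By hypothesis, $G \in S\langle X \rangle^n$, so its coefficients form a sequence in $R$ tending to $0$ in the topology of $S$. Because $R$ carries the subspace topology from $S$, a sequence in $R$ converges to $0$ in $R$ if and only if it converges to $0$ in $S$; hence the coefficients of $G$ tend to $0$ in $R$, which gives $G \in R\langle X \rangle^n$. The only nontrivial ingredient is the formal inverse function theorem over an arbitrary commutative ring, but this is standard and independent of any convergence considerations, so I do not anticipate a serious obstacle; the main point of the lemma is that the convergence hypothesis transfers cleanly from $S$ to $R$ via the subspace topology.
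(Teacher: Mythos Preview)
Your proof is correct and follows essentially the same route as the paper: invoke the formal inverse function theorem (cited there as \cite[Theorem 1.1.2]{Es}) to produce a unique inverse $H \in R\llbracket X\rrbracket^n$, use uniqueness in $S\llbracket X\rrbracket^n$ to identify $H$ with $G$, and then observe that $R\llbracket X\rrbracket^n \cap S\langle X\rangle^n = R\langle X\rangle^n$ by the subspace topology assumption.
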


\begin{proof}
    By \cite[Theorem 1.1.2]{Es}, \( F \) has a unique inverse \( H \) in \( R\llbracket X \rrbracket^n \), which is also the unique inverse of \( F \) in \( S\llbracket X \rrbracket^n \). Since \( G \) is also an inverse of \( F \) in \( S\left\langle X \right\rangle^n \subseteq S\llbracket X \rrbracket^n \), the uniqueness of the inverse implies that \( G = H \).  
    Therefore, the inverse of \( F \) must belong to the intersection
    \[
    R\llbracket X\rrbracket^n \cap S\left\langle X \right\rangle^n.
    \]
    By the definition of the Tate algebra, and since \( R \) has the subspace topology induced from \( S \), this intersection is precisely \( R\left\langle X \right\rangle^n \).
\end{proof}

\begin{proof}[Proof of Theorem \ref{main theorem 1}]
    For simplicity, we denote the following proposition by (\(\star\)):
    \begin{align}
\begin{split}
    &\textit{For every }n \geq 0\textit{ and for all but finitely many primes }p,\textit{ if }F \in \CC_p[X]^n\textit{ satisfies}\\&\det JF \in \CC_p^\times\textit{, then there exists }G \in \CC_p\left\langle  X\right\rangle^n\textit{ such that }F \circ G = G \circ F = X.
\end{split}
\tag{\(\star\)}
\end{align} 

    First, suppose that \(\JC(\CC)\) holds. By the results of \cite{BMW, CD}, \(\JC(\CC_p)\) holds for all primes \(p\), which clearly implies (\(\star\)). 

    Conversely, suppose that (\(\star\)) holds, and let \(F \in \ZZ_p[X]^n\) be such that \(\det JF \in \ZZ_p^\times\). By a suitable translation, we assume without loss of generality that \(F(0) = 0\).Under this assumption, we can apply Lemma \ref{lemma restriction of the inverse} and conclude the existence of \(G \in \ZZ_p\left<X \right>^n\) such that \(F \circ G = G \circ F = X\). Observe that \(G\) converges at every point \(x \in \ZZ_p^n\), in particular, it converges at \(\bm{1} = (1, \ldots, 1)\). Hence, if we set \(b = G(\bm{1})\), then \(F(b) = (1, \ldots, 1)\) is unimodular. Since the choice of \(n \geq 1\) and the prime \(p\) were arbitrary, we conclude that the Unimodular conjecture for \(\ZZ_p\) holds for all but finitely many primes \(p\). Therefore, by \cite[Theorem 6]{EsL}, \(\JC(\CC)\) holds.
\end{proof}


\end{document}